\newtheorem{theorem}{\indent {\bf Theorem}}
\newtheorem{lemma}{\indent {\bf Lemma}}
\theoremstyle{remark}
\begin{document}
\begin{center}
{\Large\textbf{Binary Additive Problems Involving Naturals with
Binary Decompositions of a Special Kind}}
\end{center}
\bigskip
\begin{center}
{\large \textbf{K.M. \'{E}minyan}}
\end{center}
\bigskip

\begin{abstract}
Let $h$ and $l$ be integers such that $0\le h\le 2$, $0\le l\le
4$. We obtain asymptotic formulas for the numbers of solutions of
the equations $n-3m=h$, $n-5m=l$ in positive integers $m$ and $n$
of a special kind, $m\le X$.
\end{abstract}

\textbf{Key words:} binary decomposition, binary additive
problems.
\bigskip
\begin{center}
\large 1. Introduction
\end{center}

Consider the binary decomposition of a positive integer $n$:
$$
n=\sum_{k=0}^\infty\varepsilon_k2^k,
$$
where $\varepsilon_k=0,1$ and $k=0,1,\ldots$

We split the set of positive integers into two nonintersecting
classes as follows:
$$
\mathbb{N}_0=\{n\in \mathbb{N},\quad
\sum_{k=0}^\infty\varepsilon_k\equiv 0\pmod 2\},\quad
\mathbb{N}_1=\{n\in \mathbb{N},\quad
\sum_{k=0}^\infty\varepsilon_k\equiv 1\pmod 2\}.
$$

In 1968, A.O. Gel'fond \cite{G} obtained the following theorem:
\textit{for the number of integers $n$, $n\le X$, satisfying the
conditions $n\equiv l\pmod m$, $n\in \mathbf{N}_j$ ($j=0,1$), the
following asymptotic formula is valid:}
\begin{equation}\label{AOG}
    T_j(X,l,m)=\frac{X}{2m}+O(X^\lambda),
\end{equation}
where $m$, $l$ are any naturals and $\lambda=\frac{\ln 3}{\ln
4}=0,7924818\ldots$

Suppose that
$$
\varepsilon(n)=\left\{%
\begin{array}{ll}
    1 & \hbox{for $n\in \mathbf{N}_0$,} \\
    -1 & \hbox{otherwise.} \\
\end{array}%
\right.
$$

The proof of formula (\ref{AOG}) is based on the estimate
\begin{equation}\label{AOG1}
    |S(\alpha)|\ll X^\lambda
\end{equation}
of the trigonometrical sum
$$
S(\alpha)=\sum_{n\le X}\varepsilon(n)e^{2\pi i \alpha n},
$$
which is valid for any real values of $\alpha$.

In 1996, author \cite{E1} proved the following theorem:
\textit{Let $F_{i,k}$ be the number of solutions of the equation
$n-m=1$, where $n\le X$, $n\in \mathbf{N}_i$, $m\in \mathbf{N}_k$,
$i,k=0,1$. Then the asymptotic formulas hold:}
$$
F_{0,0}(X)=\frac{X}{6}+O(\log X),\quad
F_{1,1}(X)=\frac{X}{6}+O(\log X),
$$
$$
F_{0,1}(X)=\frac{X}{3}+O(\log X),\quad
F_{1,0}(X)=\frac{X}{3}+O(\log X).
$$

It follows from this theorem that the orders of $F_{i,k}(X)$
strongly depend  on the values of $i$ and $k$.

In present paper we consider two problems in which the indicated
effect vanishes.

Our main results are the following theorems.

\begin{theorem}\label{T1} Let $h$ be any integer such that $0\le
h\le 2$. Let $I_{i,k}(X,h)$ be the number of solutions of the
equation $n-3m=h$, where $m\le X$, $m\in \mathbf{N}_i$, $n\in
\mathbf{N}_k$, $i,k=0,1$. Then the asymptotic formulas hold:
$$
I_{i,k}(X,h)=\frac{X}{4}+O(X^\lambda).
$$
\end{theorem}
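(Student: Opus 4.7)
The plan is to convert the problem to three character-sum estimates. Since $\mathbf{1}_{n\in\mathbb{N}_j}=(1+(-1)^j\varepsilon(n))/2$, expanding the product $\mathbf{1}_{m\in\mathbb{N}_i}\mathbf{1}_{3m+h\in\mathbb{N}_k}$ gives
\[
I_{i,k}(X,h)=\frac{1}{4}\bigl(\lfloor X\rfloor+(-1)^iW_1+(-1)^kW_2+(-1)^{i+k}W_3\bigr),
\]
where $W_1=\sum_{m\le X}\varepsilon(m)$, $W_2=\sum_{m\le X}\varepsilon(3m+h)$, and $W_3=\sum_{m\le X}\varepsilon(m)\varepsilon(3m+h)$. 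The first term supplies the main term $X/4+O(1)$, and the theorem reduces to showing $W_1,W_2,W_3=O(X^\lambda)$.

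The bound $W_1=S(0)\ll X^\lambda$ is immediate from (\ref{AOG1}). For $W_2$ I would detect the arithmetic progression $n\equiv h\pmod 3$ via additive characters modulo $3$:
\[
W_2=\sum_{\substack{n\le 3X+h\\ n\equiv h\,(\textup{mod}\,3)}}\varepsilon(n)=\frac{1}{3}\sum_{a=0}^{2}e^{-2\pi i ah/3}\sum_{n\le 3X+h}\varepsilon(n)\,e^{2\pi i an/3},
\]
after which each inner sum is $\ll X^\lambda$ by (\ref{AOG1}) applied at $\alpha=a/3$.

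The real obstacle is the correlation sum $W_3(X,h)$. My approach uses the binary-shift relations $\varepsilon(2n)=\varepsilon(n)$ and $\varepsilon(2n+1)=-\varepsilon(n)$: splitting $m$ by parity as $m=2m'$ or $m=2m'+1$ and then rewriting $\varepsilon(6m'+j+h)$ as $\pm\varepsilon(3m'+j')$ for the appropriate $j'\in\{0,1,2\}$, each $W_3(X,h)$ becomes a signed combination of $W_3(X/2,h')$. Packaging $h=0,1,2$ as a vector, one obtains, up to $O(1)$ boundary terms,
\[
\vec W_3(X)=M\,\vec W_3(X/2)+O(1),\qquad
M=\begin{pmatrix}1 & 1 & 0\\ -1 & 0 & -1\\ 0 & 1 & 1\end{pmatrix}.
\]
The characteristic polynomial of $M$ factors as $(1-\mu)(\mu^2-\mu+2)$, whose roots have moduli $1,\sqrt 2,\sqrt 2$, so $\|M^k\|\ll 2^{k/2}$. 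Iterating $\lfloor\log_2 X\rfloor$ times bounds $|W_3(X,h)|\ll X^{1/2}$, which lies well inside the allowed $O(X^\lambda)$ since $\lambda=(\log_2 3)/2>1/2$.

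The hard part is deriving the recursion cleanly: each of the six cases coming from $(h,\text{parity of }m)\in\{0,1,2\}\times\{\text{even},\text{odd}\}$ requires a careful application of the digit identities to extract the correct sign and new shift $j'$, and the boundary contributions from $m'=0$ in the odd branch together with the discrepancy between $\lfloor X/2\rfloor$ and $\lfloor(X-1)/2\rfloor$ must be absorbed into the $O(1)$ term at each level; the spectral step at the end is then routine.
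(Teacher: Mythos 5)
Your proposal is correct and takes essentially the same route as the paper: the same expansion of the indicator functions $\frac{1+(-1)^j\varepsilon(n)}{2}$, the same detection of the progression modulo $3$ by additive characters combined with Gel'fond's estimate (\ref{AOG1}), and the same parity-splitting recursion for the correlation sum $S_3(X,h)$, whose matrix is exactly the transpose of the paper's $A$ with eigenvalues $1,\frac{1\pm i\sqrt7}{2}$, giving the bound $O(\sqrt X)$ of Lemma 1. The only (immaterial) differences are that you bound $\sum_{m\le X}\varepsilon(m)$ via (\ref{AOG1}) at $\alpha=0$ where the paper notes it is $O(1)$, and that you iterate the recursion on the vector of sums rather than on the vector of coefficients of a linear combination.
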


\begin{theorem}\label{T2}
Let $l$ be any integer such that $0\le h\le 4$. Let $J_{i,k}(X,l)$
be the number of solutions of the equation $n-5m=l$, where $m\le
X$, $m\in \mathbf{N}_i$, $n\in \mathbf{N}_k$, $i,k=0,1$. Then the
asymptotic formulas hold:
$$
J_{i,k}(X,l)=\frac{X}{4}+O(X^\lambda).
$$
\end{theorem}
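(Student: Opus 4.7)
The plan is to expand the class indicators using $\mathbf{1}_{\mathbf{N}_j}(n) = (1 + (-1)^j \varepsilon(n))/2$, reducing $J_{i,k}(X,l)$ to a main term plus error contributions built from the function $\varepsilon$. Writing
\[
J_{i,k}(X,l) = \frac{1}{4} \sum_{m \le X} \bigl(1 + (-1)^i \varepsilon(m)\bigr)\bigl(1 + (-1)^k \varepsilon(5m+l)\bigr)
\]
and expanding, one finds a main term $X/4 + O(1)$ together with three error terms proportional to
\[
\Sigma_1 = \sum_{m \le X} \varepsilon(m), \quad \Sigma_2 = \sum_{m \le X} \varepsilon(5m+l), \quad W_l = \sum_{m \le X} \varepsilon(m)\varepsilon(5m+l).
\]
It thus suffices to prove that each of $\Sigma_1$, $\Sigma_2$, $W_l$ is $O(X^\lambda)$.

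The two linear sums follow directly from Gel'fond's results. Since $\Sigma_1 = S(0)$, the estimate (\ref{AOG1}) gives $|\Sigma_1| \ll X^\lambda$. For $\Sigma_2$, the summands correspond to $n \equiv l \pmod 5$ with $n \le 5X+l$, so $\Sigma_2 = T_0(5X+l, l, 5) - T_1(5X+l, l, 5) + O(1)$; the main terms $(5X+l)/10$ cancel in (\ref{AOG}), yielding $|\Sigma_2| \ll X^\lambda$.

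The bilinear sum $W_l$ is the main obstacle. My plan is to derive a closed system of recursions for the vector $\vec W(X) = (W_0(X), \ldots, W_4(X))^\top$ by exploiting the standard digit recursion $\varepsilon(2n) = \varepsilon(n)$, $\varepsilon(2n+1) = -\varepsilon(n)$. Splitting the sum defining $W_l(X)$ according to the parity of $m$ and reducing $5(2m'+r) + l$ to the form $2(5m' + l') + r'$ for some $l' \in \{0,1,2,3,4\}$ and $r' \in \{0,1\}$, a finite case analysis yields
\[
W_l(X) = \sum_{l'=0}^{4} M_{l,l'}\, W_{l'}\bigl(\lfloor X/2 \rfloor\bigr) + O(1)
\]
for a fixed $5 \times 5$ matrix $M$ with entries in $\{-1, 0, 1\}$. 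Iterating $\lfloor \log_2 X \rfloor$ times and accumulating the $O(1)$ errors gives $\max_{l} |W_l(X)| \ll \rho(M)^{\log_2 X} + \log X$, so the theorem reduces to proving $\rho(M) \le 2^\lambda = \sqrt 3$. The hardest, though still elementary, piece is this final spectral inequality: I expect the characteristic polynomial of $M$ to factor as $(\mu - 1)^2 (\mu^3 - \mu + 2)$ up to sign, after which it remains to check that the three roots of $\mu^3 - \mu + 2$ have modulus strictly less than $\sqrt 3$, a short calculation that then yields $|W_l| \ll X^\lambda$ and closes the proof.
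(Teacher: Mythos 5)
Your proposal is correct and follows essentially the same route as the paper: expand the class indicators, dispose of the two linear sums via Gel'fond's estimates, and bound the bilinear sum $W_l=S_5(X,l)$ by the parity-splitting recursion whose $5\times5$ matrix is exactly the transpose of the paper's $A_1$, with characteristic polynomial $(\mu-1)^2(\mu^3-\mu+2)$ and dominant root of modulus $1.52137\ldots<\sqrt{3}$ (note the real root is negative, $\approx-1.52137$, though only its modulus matters). The only cosmetic differences are that the paper iterates the dual recursion on the coefficient vector rather than on the sums themselves, and records the eigenvalues in closed Cardano form instead of factoring the characteristic polynomial.
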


Let us introduce two  sums:
$$
S_3(X,h)=\sum_{n\le X}\varepsilon(n)\varepsilon(3n+h),\quad
S_5(X,l)=\sum_{n\le X}\varepsilon(n)\varepsilon(5n+l),
$$
where $h$ and $l$ are nonnegative integers.

Proofs of the theorems 1 and 2 are based on lemmas 1 and 2
consequently (see below), and also on Gel'fond's estimate
(\ref{AOG1}).
\bigskip
\begin{center}
{\large 2. Lemmas}
\end{center}

\begin{lemma}
Suppose that $h$ is an integer such that $0\le h\le 2$. The
following estimate holds:
$$
S_3(X,h)=O(\sqrt{X}).
$$
\end{lemma}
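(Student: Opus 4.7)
The plan is to derive a linear recurrence connecting the triple $(S_3(X,0),S_3(X,1),S_3(X,2))$ to its values at argument $\lfloor X/2\rfloor$ and then extract the $\sqrt X$ bound from the spectral radius of the associated transition matrix.

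First I would split the summation by the parity of $n$. Writing $n=2N+r$ with $r\in\{0,1\}$ and using the identities $\varepsilon(2N)=\varepsilon(N)$, $\varepsilon(2N+1)=-\varepsilon(N)$, I get $\varepsilon(n)=(-1)^r\varepsilon(N)$. For each pair $(r,h)$ with $r\in\{0,1\}$ and $h\in\{0,1,2\}$, the integer $3n+h=6N+3r+h$ has the form $2(3N+h')$ or $2(3N+h')+1$ for a unique $h'\in\{0,1,2\}$, which rewrites $\varepsilon(3n+h)$ as $\pm\varepsilon(3N+h')$. Collecting the six cases and absorbing the $O(1)$ boundary contributions from the endpoints produces the recurrence
$$
\begin{pmatrix}S_3(X,0)\\ S_3(X,1)\\ S_3(X,2)\end{pmatrix}
=M\begin{pmatrix}S_3(\lfloor X/2\rfloor,0)\\ S_3(\lfloor X/2\rfloor,1)\\ S_3(\lfloor X/2\rfloor,2)\end{pmatrix}+O(1),\qquad
M=\begin{pmatrix}1&1&0\\ -1&0&-1\\ 0&1&1\end{pmatrix}.
$$

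Next I would compute the characteristic polynomial of $M$, which factors as $(1-\mu)(\mu^2-\mu+2)$, so the eigenvalues are $1$ and $(1\pm i\sqrt 7)/2$, all three of modulus at most $\sqrt 2$. Iterating the recurrence $k=\lfloor\log_2 X\rfloor$ times, starting from the trivial base case $S_3(O(1),h)=O(1)$, yields
$$
|S_3(X,h)|\ll(\sqrt 2)^k+\sum_{j=0}^{k-1}(\sqrt 2)^j\ll(\sqrt 2)^k\ll\sqrt X,
$$
which is the asserted estimate.

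The main obstacle will be the careful bookkeeping of boundary terms in the splitting step: the index $N$ runs up to $\lfloor X/2\rfloor$ or $\lfloor(X-1)/2\rfloor$ depending on $r$, a convention must be fixed for $\varepsilon(0)$ (or the $n=0$ term handled separately), and the identification of the reduced index $h'$ together with its sign must be verified in each of the six cases so the matrix $M$ really has the claimed entries. Once this is done, the fact that the dominant eigenvalue modulus $\sqrt 2$ exceeds $1$ ensures that the geometric accumulation of $O(1)$ errors over $\log_2 X$ levels is absorbed by the leading $(\sqrt 2)^k$ term, and the $O(\sqrt X)$ bound closes.
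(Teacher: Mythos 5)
Your proposal is correct and follows essentially the same route as the paper: the same parity splitting yielding the three recurrences, the same $3\times 3$ transition matrix (you iterate the vector of sums directly, the paper equivalently tracks coefficients of a linear combination via the transpose $M^{T}$), and the same conclusion from the eigenvalues $1,(1\pm i\sqrt7)/2$ of modulus at most $\sqrt2$. The error bookkeeping via the geometric sum $\sum_j(\sqrt2)^j\ll(\sqrt2)^k$ matches the paper's accumulation of the $O(|\alpha_j|+\dots)$ terms, so nothing further is needed.
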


\begin{proof}[Proof]
 Grouping summands over even and over odd $n$ and using
obvious formulae $\varepsilon(2n)=\varepsilon(n)$,
$\varepsilon(2n+1)=-\varepsilon(n)$ we have the following
equalities

\begin{equation}\label{f1}
S_3(X,0)=S_3(X2^{-1},0)+S_3(X\; 2^{-1} ,1)+O(1),
\end{equation}
\begin{equation}\label{f2}
S_3(X,1)=-S_3(X\; 2^{-1} ,0)-S_3(X\; 2^{-1} ,2)+O(1),
\end{equation}
\begin{equation}\label{f3}
S_3(X,2)=S_3(X\; 2^{-1} ,1)+S_3(X\; 2^{-1} ,2)+O(1).
\end{equation}

Consider the linear combination
$$
\alpha_0S_3(X,0)+\beta_0S_3(X,1)+\gamma_0S_3(X,2),
$$
where $\alpha_0$, $\beta_0$, $\gamma_0$ are constants. By
(\ref{f1})--(\ref{f3}) we have
$$
\alpha_0S_3(X,0)+\beta_0S_3(X,1)+\gamma_0S_3(X,2)= \alpha_1S_3(X\;
2^{-1},0)+\beta_1S_3(X\; 2^{-1},1)+\gamma_1S_3(X\; 2^{-1},2)+
$$
$$
+O(|\alpha_0|)++O(|\beta_0|)++O(|\gamma_0|),
$$
where
\begin{equation*}
\left\{%
\begin{array}{ll}
    \alpha_1=\alpha_0-\beta_0, &  \\
    \beta_1=\alpha_0+\gamma_0, &  \\
    \gamma_1=\gamma_0-\beta_0. &  \\
\end{array}%
\right.
\end{equation*}

Repeating this reasoning we arrive to the equality
$$
\alpha_0S_3(X,0)+\beta_0S_3(X,1)+\gamma_0S_3(X,2)=
\alpha_jS_3(X_j,0)+\beta_jS_3(X_j,1)+\gamma_jS_3(X_j,2)+
$$
$$
+O(|\alpha_0|+\cdots +|\alpha_{j-1}|)+O(|\beta_0|+\cdots
+|\beta_{j-1}|)+O(|\gamma_0|+\cdots +|\gamma_{j-1}|),
$$
where $j$ is any integer such that $0\le j\le \log_2X-10$,
$X_j=X2^{-j}$, and the sequences $\alpha_j$, $\beta_j$, $\gamma_j$
satisfy to the system of recurrent equations
\begin{equation}\label{f4}
\left\{%
\begin{array}{ll}
    \alpha_{j+1}=\alpha_j-\beta_j, &  \\
    \beta_{j+1}=\alpha_j+\gamma_j, &  \\
     \gamma_{j+1}=\gamma_j-\beta_j. &  \\
\end{array}%
\right.
\end{equation}

Let us write (\ref{f4}) in matrix form
$$
\begin{pmatrix}
  \alpha_{j+1} \\
   \beta_{j+1} \\
   \gamma_{j+1}\\
\end{pmatrix}=\begin{pmatrix}
  1 & -1 & 0 \\
  1 & 0 & 1 \\
  0 & -1 & 1 \\
\end{pmatrix}\begin{pmatrix}
  \alpha_{j}\\
  \beta_{j} \\
  \gamma_{j} \\
\end{pmatrix}=A\begin{pmatrix}
  \alpha_{j}\\
  \beta_{j} \\
  \gamma_{j} \\
\end{pmatrix}.
$$

Then we have
$$
\begin{pmatrix}
  \alpha_{j}\\
  \beta_{j} \\
  \gamma_{j} \\
\end{pmatrix}=A^j\begin{pmatrix}
  \alpha_{0}\\
  \beta_{0} \\
  \gamma_{0} \\
\end{pmatrix}.
$$

One can easily see that $A=CBC^{-1}$, where
$$
C=\begin{pmatrix}
  1 & 1 & 1 \\
  0 & \frac{1-i\sqrt{7}}{2} & \frac{1+i\sqrt{7}}{2} \\
  -1 & 1 & 1 \\
\end{pmatrix},\quad B=\begin{pmatrix}
  1 & 0 & 0 \\
  0 & \lambda_2 & 0 \\
  0 & 0 & \lambda_3 \\
\end{pmatrix};
$$
here $1$, $\lambda_2=\frac{1+i\sqrt{7}}{2}$,
$\lambda_3=\frac{1-i\sqrt{7}}{2}$ are eigenvalues of $A$.

Thus we have
\begin{equation}\label{f5}
\begin{pmatrix}
  \alpha_{j}\\
  \beta_{j} \\
  \gamma_{j} \\
\end{pmatrix}=C\begin{pmatrix}
  1 & 0 & 0 \\
  0 & \lambda_2^j & 0 \\
  0 & 0 & \lambda_3^j \\
\end{pmatrix}C^{-1}\begin{pmatrix}
  \alpha_{0}\\
  \beta_{0} \\
  \gamma_{0} \\
\end{pmatrix}.
\end{equation}

Note that $|\lambda_2|=|\lambda_3|=\sqrt{2}$. It follows from
(\ref{f5}) that if $\begin{pmatrix}
  \alpha_{0}\\
  \beta_{0} \\
  \gamma_{0} \\
\end{pmatrix}=\begin{pmatrix}
  1 \\
  0 \\
  0 \\
\end{pmatrix}$, or $\begin{pmatrix}
  \alpha_{0}\\
  \beta_{0} \\
  \gamma_{0} \\
\end{pmatrix}=\begin{pmatrix}
  0 \\
  1 \\
  0 \\
\end{pmatrix}$, or
$\begin{pmatrix}
  \alpha_{0}\\
  \beta_{0} \\
  \gamma_{0} \\
\end{pmatrix}=\begin{pmatrix}
  0 \\
  0 \\
  1 \\
\end{pmatrix}$ then
\begin{equation}\label{f6}
|\alpha_j|=O(\sqrt{2}{\,^j}),\quad
|\beta_j|=O(\sqrt{2}{\,^j}),\quad |\gamma_j|=O(\sqrt{2}{\,^j}).
\end{equation}

Let $J$ be the largest natural number such that $ J\le
\log_2X-10$,
 $
\begin{pmatrix}
  \alpha_{0}\\
  \beta_{0} \\
  \gamma_{0} \\
\end{pmatrix}=\begin{pmatrix}
  1 \\
  0 \\
  0 \\
\end{pmatrix}$.

It follows from (\ref{f6}) that
\begin{equation*}
    |S_3(X,0)|\le |\alpha_J||S_3(X_J,0)|+|\beta_J||S_3(X_J,1)|
    +|\gamma_J||S_3(X_J,2)|+O(\sqrt{X})=O(\sqrt{X}).
\end{equation*}

The estimates $S_3(X,1)=O(\sqrt{X}\,)$, $S_3(X,2)=O(\sqrt{X}\,)$
we obtain in a similar way.
\end{proof}

\begin{lemma}
Suppose that $l$ is an integer such that $0\le l\le 4$. The
following estimate holds:
$$
S_5(X,l)=O(X^\mu),
$$
where $\mu=0,60538\ldots$
\end{lemma}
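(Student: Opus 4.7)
The plan is to follow the proof of Lemma 1, replacing the three residues modulo $3$ by the five residues modulo $5$ and working with a $5\times 5$ linear recurrence in place of a $3\times 3$ one.

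First I split each of the five sums $S_5(X,l)$, $0\le l\le 4$, over even and odd values of the summation variable and apply $\varepsilon(2n)=\varepsilon(n)$, $\varepsilon(2n+1)=-\varepsilon(n)$ to both $\varepsilon(n)$ and $\varepsilon(5n+l)$. The parity of $5n+l$ being $n+l\pmod 2$, the inner sums that appear all have the form $\varepsilon(m)\varepsilon(5m+l')$ with $l'\in\{0,1,2,3,4\}$ and signs $\pm 1$; carrying out these five routine parity computations yields a system
$$S_5(X,l)=\sum_{l'=0}^{4}a_{l,l'}\,S_5(X/2,l')+O(1),\qquad 0\le l\le 4.$$
I then iterate an arbitrary linear combination $\sum_{l}c_l^{(0)}S_5(X,l)$ exactly as in Lemma 1, obtaining after $j$ steps a combination $\sum_l c_l^{(j)}S_5(X_j,l)$ in which the coefficient vector satisfies $c^{(j)}=A^{j}c^{(0)}$, where $A$ is the $5\times 5$ coefficient-transition matrix determined by the $a_{l,l'}$, and an accumulated error of size $O\!\bigl(\sum_{i<j}\|c^{(i)}\|\bigr)$. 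Choosing $c^{(0)}=e_l$ and $j$ to be the largest integer with $X_j=X\cdot 2^{-j}$ bounded reduces everything to a bound on $\|A^{j}\|$.

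The crux is the spectral radius of this matrix. A direct computation gives
$$\det(\lambda I-A)=(\lambda-1)^{2}(\lambda^{3}-\lambda+2).$$
The cubic $\lambda^3-\lambda+2$ has discriminant $-4(-1)^3-27(2)^2=-104<0$, hence a unique real root $\lambda_\star\in(-2,-1)$ and a pair of complex conjugate roots, whose common modulus $\sqrt{-2/\lambda_\star}$ is strictly smaller than $|\lambda_\star|$. The double eigenvalue $1$ can only contribute a polynomial factor in $j$, so $\|A^{j}\|=O\bigl(|\lambda_\star|^{\,j}\bigr)$. Inserting $j\sim\log_2 X$ and noting that the geometric error terms are dominated by the last one, one gets
$$S_5(X,l)\ll |\lambda_\star|^{\,j}=O(X^{\mu}),\qquad \mu=\log_2|\lambda_\star|=0{,}60538\ldots,$$
uniformly in $0\le l\le 4$.

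The only real obstacle beyond Lemma 1 is the spectral analysis: there the two nontrivial eigenvalues $(1\pm i\sqrt 7)/2$ have the clean common modulus $\sqrt 2$, yielding the transparent exponent $1/2$, whereas here the spectral radius is the absolute value of an irrational root of the cubic $\lambda^3-\lambda+2$, so no closed-form analogue of the $\sqrt X$ bound is available and one is forced to work with the numerical constant $\mu=\log_2|\lambda_\star|$. Explicitly diagonalising $A$ in the style of the $C,B,C^{-1}$ decomposition of Lemma 1 is correspondingly more cumbersome, but only the spectral radius is needed for the stated estimate.
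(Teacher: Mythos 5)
Your proposal is correct and follows essentially the same route as the paper: the same parity-splitting recurrence, the same $5\times 5$ transition matrix (your characteristic polynomial $(\lambda-1)^2(\lambda^3-\lambda+2)$ is consistent with the paper's $A_1$, whose nontrivial real eigenvalue has modulus $1.52137\ldots$), and the same conclusion $\mu=\log_2$ of the spectral radius, with the double eigenvalue $1$ handled via its Jordan block exactly as in the paper's decomposition $A_1=C_1B_1C_1^{-1}$.
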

\begin{proof}[Proof]
Consider the linear combination
$$
\alpha_0S_5(X,0)+\beta_0S_5(X,1)+\gamma_0S_5(X,2)+\sigma_0S_5(X,3)
+\tau_0S_5(X,4),
$$
where $\alpha_0$, $\beta_0$, $\gamma_0$, $\sigma_0$ and $\tau_0$
are constants.

Repeating the reasoning of the proof of lemma 1 we arrive to the
equality
$$
\alpha_0S_5(X,0)+\beta_0S_5(X,1)+\gamma_0S_5(X,2)+\sigma_0S_5(X,3)
+\tau_0S_5(X,4)=
$$
$$
\alpha_JS_5(X_J,0)+\beta_JS_5(X_J,1)+\gamma_JS_5(X_J,2)
+\sigma_JS_5(X_J,3) +\tau_JS_5(X_J,4)+
$$
$$
+O(|\alpha_0|+\cdots +|\alpha_{J-1}|)+\cdots +O(|\tau_0|+\cdots
+\tau_{J-1}|),
$$
where $J$ is the largest natural number such that $ J\le
\log_2X-10$, $X_J=X2^{-J}$ and vector $\begin{pmatrix}
  \alpha_J   \\
  \beta_J \\
  \gamma_J \\
  \sigma_J \\
  \tau_J \\
\end{pmatrix}$ is defined with equation
$$
\begin{pmatrix}
  \alpha_J   \\
  \beta_J \\
  \gamma_J \\
  \sigma_J \\
  \tau_J \\
\end{pmatrix}=\begin{pmatrix}
  1 & -1 & 0 & 0 & 0 \\
  0 & 0 & 1 & -1 & 0 \\
  1 & 0 & 0 & 0 & 1 \\
  0 & -1 & 1 & 0 & 0 \\
  0 & 0 & 0 & -1 & 1 \\
\end{pmatrix}^J\begin{pmatrix}
  \alpha_0   \\
  \beta_0 \\
  \gamma_0 \\
  \sigma_0 \\
  \tau_0 \\
\end{pmatrix}=A_1^J\begin{pmatrix}
  \alpha_0   \\
  \beta_0 \\
  \gamma_0 \\
  \sigma_0 \\
  \tau_0 \\
  \end{pmatrix}.
$$

Let us write out the eigenvalues of the matrix $A_1$:
$$
\lambda_1=1,\quad \lambda_2=\frac{3^{1/3} + (9 -
\sqrt{78})^{2/3}}{3^{2/3}(9 - \sqrt{78})^{1/3}},\quad
\lambda_3=\frac{1+i\sqrt{3}}{2(27-3
\sqrt{78})^{1/3}}+\frac{(1-i\sqrt{3})(9-\sqrt{78})^{1/3}}
{2\;3^{2/3}},
$$
$$ \lambda_4=\frac{1-i\sqrt{3}}{2(27-3
\sqrt{78})^{1/3}}+\frac{(1+i\sqrt{3})(9-\sqrt{78})^{1/3}}
{2\;3^{2/3}}.
$$

Note that eigenvalues $\lambda_2$, $\lambda_3$, $\lambda_4$ are
simple and $\lambda_1$ has multiplicity 2.

It is a well known fact of Linea Algebra that there exists a
nonsingular matrix $C_1$ such that $A_1=C_1B_1C_1^{-1}$, where
$$
B_1=\begin{pmatrix}
  1 & 1 & 0 & 0 & 0 \\
  0 & 1 & 0 & 0 & 0 \\
  0 & 0 & \lambda_2 & 0 & 0 \\
  0 & 0 & 0 & \lambda_3 & 0 \\
  0 & 0 & 0 & 0 & \lambda_4 \\
\end{pmatrix}
$$

Thus we have the equality
\begin{equation}\label{f9}
    \begin{pmatrix}
  \alpha_J   \\
  \beta_J \\
  \gamma_J \\
  \sigma_J \\
  \tau_J \\
\end{pmatrix}=C_1B_1^JC_1^{-1}\begin{pmatrix}
  \alpha_0   \\
  \beta_0 \\
  \gamma_0 \\
  \sigma_0 \\
  \tau_0 \\
  \end{pmatrix}.
\end{equation}

Since $|\lambda_2|=\max\limits_{1\le
j\le4}|\lambda_j|=1,52137\ldots$ it follows from (\ref{f9}) that
the inequalities
$$
|\alpha_J|\ll |\lambda_2|^J,\ldots , |\tau_J|\ll |\lambda_2|^J.
$$
 hold and so for any $l$, $0\le l\le 4$ we have the estimate
$$
S_5(X,l)=O(X^\mu),
$$
where $\mu=\frac{\log\lambda_2}{\log 2}=0,60538\ldots$
\end{proof}

\begin{center}
{\large 3. Proofs of Theorems 1 and 2}
\end{center}
Let us prove theorem 1. For any integer $h$, $0\le h\le 2$, and
for any $i,j=0,1$ we have

\begin{gather*}
I_{i,k}(X,h)=\sum_{m\le
X}\left(\frac{1+(-1)^i\varepsilon(m)}{2}\right)
\left(\frac{1+(-1)^k\varepsilon(3m+h)}{2}\right)=\\
\frac{X}{4}+\frac{(-1)^i}{4}\sum_{m\le
X}\varepsilon(n)+\frac{(-1)^k}{4}\sum_{m\le X}\varepsilon(3m+h)+
\frac{(-1)^{i+k}}{4}S_3(X,h)+O(1)=\\
\frac{X}{4}+\frac{(-1)^i}{4}\sum_{m\le
X}\varepsilon(m)+\frac{(-1)^k}{4}\sum_{c=1}^3e^{-2\pi i
\frac{ch}{3}} \sum_{n\le 3X+h}\varepsilon(n)e^{2\pi i
\frac{cn}{3}}+\frac{(-1)^{i+k}}{4}S_3(X,h)+O(1).
\end{gather*}

Now theorem 1 follows immediately from obvious inequality
$\sum_{m\le X}\varepsilon(m)=O(1)$, Gel'fond's estimate
(\ref{AOG1}) and lemma 1.

Proof of theorem 2  essentially coincides with proof of theorem 1.
The only distinction is use lemma 2 instead of lemma 1.

\renewcommand{\refname}{References}

{\large Moscow State University of Applied Biotechnologies}

E-mail address: eminyan@mail.ru

\end{document}